\newtheorem{thm}{Theorem}
\newtheorem{lem}[thm]{Lemma}
\theoremstyle{definition}
\newtheorem*{defn}{Definition}
\newcommand{\mc}[1]{\mathcal{#1}}
\newcommand{\mf}[1]{\mathfrak{#1}}
\newcommand{\tdlc}{t.d.l.c.\@\xspace}
\newcommand{\Mon}{\mathrm{Mon}}
\newcommand{\inv}{^{-1}}
\newcommand{\triv}{\{1\}}
\newcommand{\QZ}{\mathrm{QZ}}
\newcommand{\QC}{\mathrm{QC}}
\newcommand{\CC}{\mathrm{C}}
\newcommand{\N}{\mathrm{N}}
\newcommand{\con}{\mathrm{con}}
\newcommand{\grp}[1]{\langle #1 \rangle}
\newcommand{\defbold}{\textbf}
\begin{document}

\title{A sufficient condition for a locally compact almost simple group to have open monolith}
\author{Colin D. Reid}

\maketitle

\begin{abstract}
We obtain a sufficient condition, given a totally disconnected, locally compact group $G$ with a topologically simple monolith $S$, to ensure that $S$ is open in $G$ and abstractly simple.
\end{abstract}

\paragraph{\textbf{Acknowledgement}}The impetus for writing this note was a question of Waltraud Lederle, as well as some questions arising from \cite{CRW} and from an ongoing project with Alejandra Garrido and David Robertson.  I thank Waltraud Lederle and my collaborators for their insightful questions and comments.

\

We recall some definitions from \cite{CRW}.

\begin{defn}
Let $G$ be a totally disconnected, locally compact (\tdlc) group.  We say $G$ is \defbold{expansive} if there is a neighbourhood $U$ of the identity in $G$ such that $\bigcap_{g \in G}gUg\inv = \triv$.  The group $G$ is \defbold{regionally expansive} if there is a compactly generated open subgroup $O$ of $G$ such that $O$ is expansive; equivalently, every open subgroup containing $O$ is expansive.

A topological group $G$ is \defbold{monolithic} if there is a unique smallest nontrivial closed normal subgroup of $G$, called the \defbold{monolith} $\Mon(G)$ of $G$.  A \tdlc group $G$ is \defbold{robustly monolithic} if it is monolithic and the monolith is nondiscrete, regionally expansive, and topologically simple.
\end{defn}

Note that every topologically simple \tdlc group $S$ is expansive, and hence if $S$ is compactly generated, then it is regionally expansive.  Thus in the context of topologically simple groups, `regionally expansive' should be considered a generalization of `compactly generated'.  The definition of `robustly monolithic' allows us to consider a more general situation, where the regionally expansive topologically simple group $S$ is embedded as a closed normal subgroup in some larger \tdlc group $G$, such that $\CC_G(S)=\triv$.  It is then natural to ask how complex the quotient $G/S$ can be as a topological group.

Here is a sufficient condition for $G/S$ to be discrete, in other words, for $S$ to be open in $G$; in the situation described, in fact $S$ is abstractly simple.

\begin{thm}\label{mainthm}
Let $G$ be a robustly monolithic \tdlc group.  Suppose that $G$ has an open subgroup of the form $K \times L$ where $K$ and $L$ are nontrivial closed subgroups of $G$.  Then for every nontrivial subgroup $H$ of $G$ such that $\Mon(G) \le \N_G(H)$, without assuming that $H$ is closed, it follows that $H$ is open in $G$ and contains $\Mon(G)$.  In particular, $\Mon(G)$ itself is abstractly simple and open in $G$.
\end{thm}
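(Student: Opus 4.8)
The plan is to reduce the whole statement to two assertions and then attack the harder one with the structure theory of robustly monolithic groups. Write $S=\Mon(G)$. Since $\CC_G(S)$ is a closed normal subgroup of $G$, either $\CC_G(S)=\triv$ or $\CC_G(S)\supseteq S$; the latter would make $S$ abelian, impossible for a nondiscrete topologically simple group, so $\CC_G(S)=\triv$ (as already noted in the text). Now let $H\ne\triv$ with $S\le\N_G(H)$. Then $H\cap S$ is a subgroup of $S$ normalized by $S$, i.e.\ a (not necessarily closed) normal subgroup of $S$, and it is nontrivial: if $H\cap S=\triv$ then $HS=SH$ is a subgroup in which $S$ and $H$ are both normal (the first since $S\normal G$, the second since $S$ and $H$ each normalize $H$) with trivial intersection, so $HS=H\times S$, whence $H\le\CC_G(S)=\triv$, a contradiction. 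Hence the statement follows from: (I) $S$ is open in $G$; and (II) $S$ is abstractly simple. Indeed, granting these, $H\cap S$ is a nontrivial normal subgroup of the abstractly simple group $S$, so $H\cap S=S$, i.e.\ $S\le H$, and then $H$ is open because $S$ is.

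Next I would normalize the product hypothesis. One has $\QZ(S)=\triv$ for every nondiscrete topologically simple \tdlc group, and hence $\QZ(G)=\triv$ as well, since $\QZ(G)\cap S\le\QZ(S)=\triv$ while a nontrivial (characteristic) subgroup of $G$ meeting $S$ trivially would centralize $S$. In particular $G$ has no nontrivial discrete locally normal subgroup (such a subgroup lies in $\QZ(G)$), so the factors $K,L$, being normal in the open subgroup $K\times L$, are nondiscrete; replacing them by $V\cap K$ and $V\cap L$ for a compact open $V\le K\times L$, I may assume $K$ and $L$ are infinite compact and $O:=K\times L$ is compact open. Put $A:=S\cap K$, $B:=S\cap L$, $D:=S\cap O$. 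Because $A\le S\normal G$, $A\le K$, and $L$ centralizes $K$, the normal closure $\ol{\langle A^{O}\rangle}=\ol{\langle A^{K}\rangle}$ is contained in $S\cap K=A$, so $A\normal O$; similarly $B\normal O$. Since $A\times B\le D\le O$ we get $[O:D]\le[K:A]\,[L:B]$, so assertion (I) is equivalent to: $A$ is open in $K$ and $B$ is open in $L$. (A short commutator computation shows that if both $A$ and $B$ were trivial then $\pi_{K}(D)\le Z(K)$ and $\pi_{L}(D)\le Z(L)$, so $D$ would be abelian and open in $S$, again contradicting $\QZ(S)=\triv$; so that degenerate case does not occur.)

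The heart of the proof is showing that $A=S\cap K$ is open in $K$; by the symmetric argument $B$ is open in $L$, and then $S\supseteq A\times B$ is open in $G$, which is (I). This is the step I expect to be the main obstacle, and it is where regional expansiveness of $S$ is genuinely used. Note $A$ is a locally normal subgroup of $G$ contained in $S$, with $\CC_{G}(A)\supseteq L\ne\triv$, so $A$ represents a nonzero element of the centralizer lattice $\LC(G)$, and $\ol{\langle A^{G}\rangle}=S$ because it is a nontrivial closed normal subgroup contained in $S$. I would then invoke the structure theory of robustly monolithic groups from \cite{CRW}: choose a compactly generated open subgroup $O_{0}\le S$ together with a compact open subgroup of trivial core, so that $O_{0}$ acts faithfully and vertex-transitively on a locally finite connected graph, and track how the decomposition $O=K\times L$ partitions a compact open subgroup; the rigid stabilizers attached to $\LC(G)$ are nontrivial and topologically generate $S$, and faithfulness of this action precludes $A$ from being "thin" in $K$ — concretely, a non-open $A$ would produce a nontrivial element of $\QZ(G)$ or contradict expansiveness of $O_{0}$. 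Carrying this out amounts to unwinding the Boolean-algebra / rigid-stabilizer machinery rather than to a short self-contained argument.

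Finally, abstract simplicity (II). Once (I) holds, $A\times B$ is an open subgroup of $S$ that is a nontrivial direct product of closed subgroups, so $S$ is itself a robustly monolithic group to which all the hypotheses apply, with the role of the monolith now played by $S$ and assertion (I) vacuous. Let $N\normal S$ be nontrivial; topological simplicity gives $\ol{N}=S$, so $N$ is dense. Intersecting $N$ with $A\times B$ and with a compactly generated open subgroup of $S$ on which the conjugation action is faithful, and running the argument of the previous paragraph inside $S$, forces $N$ to contain a compact open subgroup of $S$. Its normal closure in $S$ is then an open normal subgroup of the nondiscrete topologically simple group $S$, hence equal to $S$; so $N\supseteq S$ and $N=S$. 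Combined with the reduction of the first paragraph, this proves the theorem; in particular $\Mon(G)=S$ is abstractly simple and open in $G$.
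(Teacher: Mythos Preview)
Your reductions in the first paragraph are correct, and the normalization of the product hypothesis is fine. But the proof has a genuine gap at exactly the point you flag as ``the main obstacle'': you never actually show that $A=S\cap K$ is open in $K$. What you write there is a list of relevant objects (locally normal subgroups, the centralizer lattice, a Cayley--Abels graph for a compactly generated open subgroup of $S$) followed by the assertion that ``a non-open $A$ would produce a nontrivial element of $\QZ(G)$ or contradict expansiveness of $O_0$'', with no mechanism linking these. The paper's argument is quite different from anything you sketch: it works with the Stone space $X_D=\mf{S}(\LD(G))$, uses minimality and compressibility of the $S$-action (Lemma~\ref{lem:simple_dynamics}) to pass these properties to any dense normal subgroup $H\le S$, finds $h\in H$ and $K'\in\LD(G)$ with $hK'h^{-1}$ properly and non-openly contained in $K'$, and then brings in two ingredients you do not mention at all: the Tits core result $\con_G(h)\le H$ from \cite{CRW-TitsCore}, and the fact from \cite{CRW-Part2} that $\con_G(h)\cap\CC_{K'}(hK'h^{-1})$ is open in $\CC_{K'}(hK'h^{-1})$. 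Minimality then lets one cover $X_D$ by finitely many $H$-translates of the clopen set corresponding to this centralizer, and property $(*)$ of $\LD(G)$ forces the group they generate to be open. None of this is ``unwinding the Boolean-algebra / rigid-stabilizer machinery''; the contraction-group step is the decisive idea, and it is absent from your proposal.

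Your treatment of (II) inherits the same gap: you say ``running the argument of the previous paragraph inside $S$'' forces a dense normal $N\le S$ to contain a compact open subgroup, but the previous paragraph contained no such argument. Note also that separating (I) and (II) buys nothing here: the paper proves a single claim (every nontrivial $H$ with open normalizer containing $S$ is open and contains $S$) that yields both at once, precisely because the contraction-group argument applies uniformly to any dense normal subgroup of $S$, not just to $S$ itself.
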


The proof is based on the local structure theory developed in \cite{CRW-Part1}, \cite{CRW-Part2} and \cite{CRW}; we briefly recall the necessary background.

\begin{defn}
We define the \defbold{quasi-centralizer} $\QC_G(H)$ of a subgroup $H$ of a topological group $G$ to be the set of elements $g \in G$ such that $g$ commutes with some open subgroup of $H$, and write $\QZ(G) := \QC_G(G)$.  We say $G$ is \defbold{[A]-semisimple} if $\QZ(G) = \triv$, and whenever $A$ is an abelian subgroup of $G$ with open normalizer, then $A = \triv$.

Given an [A]-semisimple \tdlc group $G$, the \defbold{(globally defined) centralizer lattice} of $G$ is the set
\[
\mathrm{LC}(G) = \{\CC_G(K) \mid K \le G, \N_G(K) \text{ is open in } G\},
\]
equipped with the partial order of inclusion of subsets of $G$.  Within $\mathrm{LC}(G)$, the \defbold{(globally defined) decomposition lattice} $\mathrm{LD}(G)$ consists of those $K \in \mathrm{LC}(G)$ such that $K\CC_G(K)$ is open in $G$.
\end{defn}

By {\cite[Proposition~5.1.2]{CRW}}, every robustly monolithic \tdlc group $G$ is [A]-semisimple, so the definitions above apply.  Note that if $G$ is [A]-semisimple, then so is every open subgroup of $G$.

By construction, given $K \in \mathrm{LC}(G)$, then $K$ is closed in $G$ and $\N_G(K)$ is open in $G$.  It is shown in \cite{CRW-Part1} that $\mathrm{LC}(G)$ is a Boolean algebra, on which the map $K \mapsto \CC_G(K)$ is the complementation map.  The centralizer lattice is a local invariant of $G$, in the sense that if $O$ is any open subgroup of $G$, then $\mathrm{LC}(G)$ is $O$-equivariantly isomorphic to $\mathrm{LC}(O)$ via the map $K \mapsto K \cap O$.  The decomposition lattice is a local invariant of $G$ in the same manner; in particular, it accounts for all direct factors of open subgroups of $G$.  The decomposition lattice has the following additional property:

$(*)$ Given $A_1,\dots,A_n \in \mathrm{LD}(G)$ with least upper bound $A$ in $\mathrm{LD}(G)$, and given open subgroups $B_i$ of $A_i$, then as a subset of $A$, the product $B_1 B_2 \dots B_n$ is a neighbourhood of the identity.

(To see why $(*)$ holds, note that we can choose compact open subgroups $B_i$ of $A_i$ that normalize each other, so that $B = B_1 B_2 \dots B_n$ is a compact subgroup of $G$; the fact that $B$ is an open subgroup of an element of $\mathrm{LD}(G)$ then follows by \cite[Theorem~4.5]{CRW-Part1}.)

There is a natural action of $G$ on $\mathrm{LC}(G)$ by conjugation, which preserves the partial order and hence the Boolean algebra structure; note also that the stabilizers of this action are open.  There is then a corresponding continuous action of $G$ by homeomorphisms on the Stone space $\mf{S}(\mathrm{LC}(G))$ of $\mathrm{LC}(G)$, which is a compact zero-dimensional Hausdorff space.  The latter action has useful dynamical properties.  Given a group $G$ acting on a topological space $X$, we say the action is \defbold{minimal} if every orbit is dense, and \defbold{compressible} if there is a nonempty open subset $Y$ such that for every nonempty open subset $Z$ of $X$, there is $g \in G$ such that $gY \subseteq Z$.

\begin{lem}\label{lem:simple_dynamics}
Let $G$ be a robustly monolithic \tdlc group and let $\mc{A}$ be a $G$-invariant subalgebra of $\mathrm{LC}(G)$.  Then the $G$-action on $\mf{S}(\mc{A})$ is continuous and the $\Mon(G)$-action is minimal and compressible.
\end{lem}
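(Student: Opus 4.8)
The plan is as follows. The first assertion is routine and does not use that $G$ is robustly monolithic. The conjugation action of $G$ on $\mathrm{LC}(G)$ has open point stabilisers, the stabiliser of $K$ being $\N_G(K)$, and hence so does the restricted action on $\mc A$. An action on a Boolean algebra with open point stabilisers induces a continuous action on the Stone space: writing $\widehat a\subseteq\mf{S}(\mc A)$ for the basic clopen set attached to $a\in\mc A$, one has $g\cdot\widehat a=\widehat{g\cdot a}$, and if $g_0\cdot x_0\in\widehat a$, that is, if $g_0\inv a\in x_0$, then $g_0\Stab_G(g_0\inv a)\times\widehat{g_0\inv a}$ is a neighbourhood of $(g_0,x_0)$ whose image lies in $\widehat a$. (Equivalently, $\mf{S}(\mc A)$ is the Stone dual of the inclusion $\mc A\injects\mathrm{LC}(G)$, hence a $G$-equivariant continuous quotient of $\mf{S}(\mathrm{LC}(G))$, on which the $G$-action is already known to be continuous.)

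For the rest put $S:=\Mon(G)$. First, $\CC_G(S)=\triv$: being a closed normal subgroup of $G$ meeting $S$ in $\mathrm{Z}(S)=\triv$ (a nondiscrete topologically simple \tdlc group is nonabelian), it cannot contain the monolith. Secondly, $S$ normalises no element of $\mathrm{LC}(G)$ other than $\triv$ and $G$: if $S$ normalises $\alpha$ then $S\cap\alpha$ is a closed subgroup of $S$ normalised by $S$, hence $\triv$ or $S$; in the former case $[S,\alpha]\le S\cap\alpha=\triv$, so $\alpha\le\CC_G(S)=\triv$, and in the latter $S\le\alpha$, so $\CC_G(\alpha)\le\CC_G(S)=\triv$ and $\alpha=\CC_G(\CC_G(\alpha))=G$. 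Consequently, for $a\in\mathrm{LC}(G)\setminus\{\triv,G\}$ the open subgroup $\Stab_S(a)=S\cap\N_G(a)$ is proper in $S$, so its normal core is a proper closed normal subgroup of $S$, hence (by topological simplicity) trivial; as $S$ is nondiscrete this forces $\Stab_S(a)$ to have infinite index in $S$, so the $S$-orbit of $a$ is infinite.

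The substantive input is that the $S$-action on $\mf{S}(\mathrm{LC}(G))$ is itself minimal and compressible; this is what the local structure theory of \cite{CRW-Part1}, \cite{CRW} provides for robustly monolithic groups, exploiting that $S$ is nondiscrete, regionally expansive and topologically simple (for instance, that $S$ is micro-supported on $\mf{S}(\mathrm{LC}(G))$, with rigid stabilisers $\Rist_S(U)$ ample enough to force minimality by topological simplicity of $S$). Granting this, the $S$-action on $\mf{S}(\mc A)$ is minimal, a quotient of a minimal action being minimal; and then $\mf{S}(\mc A)$ has no isolated points unless it is a single point (by the previous paragraph such a point would have infinite $S$-orbit, yet minimality would make $\mf{S}(\mc A)$ finite). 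So $\mc A$ is finite — in which case the transitive $S$-action is trivially compressible — or atomless, and we assume the latter.

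Compressibility of the $S$-action on $\mf{S}(\mc A)$ is where the real work lies, since compressibility does not pass to quotients for free. I would prove it directly in $\mc A$: fix a suitable nonzero $a_0\in\mc A$; given a nonzero target $b\in\mc A$, use minimality and compactness of $\widehat{a_0}$ to express $\widehat{a_0}$ as a finite disjoint union of clopens each carried into $\widehat b$ by some element of $S$, and then splice these local moves into a single $s\in S$ with $sa_0\le b$, so that $a_0$ is a witness. Reducing the local problems to the decomposition lattice — so that property $(*)$ becomes applicable — uses that $\mathrm{LD}(G)$ is dense in $\mathrm{LC}(G)$. The splicing is the genuine obstacle: minimality yields, for each piece of $\widehat{a_0}$, an element of $S$ moving it into $\widehat b$, but gives no control over supports or mutual interference, and upgrading this to one element of $S$ that moves all of $\widehat{a_0}$ at once needs the rigid-stabiliser calculus together with property $(*)$, which governs precisely the products of open subgroups of the decomposition factors in play and guarantees that the assembled element lies in $S$ and not merely in $G$. (If the structure-theoretic input could be strengthened to extreme proximality of the $S$-action on $\mf{S}(\mathrm{LC}(G))$, compressibility of $\mf{S}(\mc A)$ would follow instead by transporting that property along the quotient map, since extreme proximality passes to quotients and implies compressibility; but the self-contained route is the splicing.)
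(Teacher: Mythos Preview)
The paper's proof is two lines: stabilisers in $\mc A$ are open, giving continuity; then \cite[Theorem~7.3.3]{CRW} gives minimality and compressibility of the $\Mon(G)$-action on $\mf{S}(\mathrm{LC}(G))$, and both properties are inherited by the quotient $\Mon(G)$-space $\mf{S}(\mc A)$. You reproduce the first step correctly and invoke the same structural input, but then take a sharp detour on the grounds that ``compressibility does not pass to quotients for free''. That is the point of disagreement with the paper, and it is where your argument runs into trouble.

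First, the paper simply treats passage to quotients as known. For minimal actions on compact spaces, having a compressible nonempty open set is equivalent to strong proximality (a standard fact in topological dynamics), and minimal strongly proximal actions --- $G$-boundaries --- are closed under factors; you yourself note at the end that extreme proximality would transport along the quotient map. So the one-line justification the paper gives is adequate, and your elaborate workaround is unnecessary.

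Second, your alternative route is not actually carried out and, as sketched, would not work for the lemma as stated. The ``splicing'' step --- combining finitely many local moves into a single $s\in S$ with $sa_0\le b$ --- is precisely the hard part, and you only gesture at it. You appeal to property~$(*)$ and to ``$\mathrm{LD}(G)$ dense in $\mathrm{LC}(G)$'', but the lemma concerns an \emph{arbitrary} $G$-invariant subalgebra $\mc A$ of $\mathrm{LC}(G)$, which need have nothing to do with $\mathrm{LD}(G)$; property~$(*)$ is specific to the decomposition lattice and gives no leverage on a general $\mc A$. The density claim is unjustified. And the preliminary paragraph establishing $\CC_G(S)=\triv$, infinite orbits on $\mathrm{LC}(G)\setminus\{\triv,G\}$, and the dichotomy finite/atomless for $\mc A$ is correct but plays no role in the paper's argument and is not needed here either.

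In short: your continuity argument matches the paper's; for the rest, the paper just quotes \cite[Theorem~7.3.3]{CRW} and passes to the quotient, whereas you attempt a direct construction that is both superfluous and incomplete.
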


\begin{proof}
The action is continuous because stabilizers of elements of $\mc{A}$ are open.  By {\cite[Theorem~7.3.3]{CRW}}, the action of $\Mon(G)$ on $\mf{S}(\mathrm{LC}(G))$ is minimal and compressible; these properties pass to any quotient $\Mon(G)$-space.
\end{proof}

We can now prove the theorem.

\begin{proof}[Proof of Theorem~\ref{mainthm}]
As noted above, $G$ is [A]-semisimple, and hence the centralizer lattice $\mathrm{LC}(G)$ is a Boolean algebra, with $G$-invariant subalgebra $\mathrm{LD}(G)$.  The condition that $G$ has an open subgroup that splits nontrivially as a direct product then amounts to the condition that $|\mathrm{LD}(G)| > 2$.  Applying Stone duality, the $G$-space $X_C := \mf{S}(\mathrm{LC}(G))$ admits a $G$-equivariant quotient space $X_D := \mf{S}(\mathrm{LD}(G))$ with $|X_D|>1$.

Let $S = \Mon(G)$.  By Lemma~\ref{lem:simple_dynamics}, the action of $G$ on $X_D$ is continuous and the action of $S$ is minimal and compressible.  In particular, $S$ acts nontrivially on $X_D$.  Since $S$ is topologically simple and the action is continuous, in fact $S$ acts faithfully on $X_D$, so the action of $G$ on $X_D$ is also faithful.

\emph{Claim: Let $H$ be a nontrivial subgroup of $G$, such that $\N_G(H)$ is open in $G$ and contains $S$ (do not assume that $H$ is closed).  Then $H$ is open in $G$ and $S \le H$.}

We begin the proof of the claim with some reductions.  By \cite[Lemma~5.1.4]{CRW}, the normalizer $\N_G(H)$ is robustly monolithic with monolith $S$.  Since $\N_G(H)$ is open in $G$, it also has an open subgroup that splits nontrivially as a direct product.  Thus we may assume without loss of generality that $G = \N_G(H)$.  Since $H$ and $S$ are normal, we have $[H,S] \le H \cap S$.  Since $\CC_G(S) = \triv$, it follows that $H$ and $S$ have nontrivial intersection; thus we may replace $H$ with $H \cap S$ and assume that $H$ is a normal subgroup of $S$.  Since $S$ is topologically simple, it follows that $H$ is dense in $S$.  By continuity, we then see that the action of $H$ on $X_D$ is minimal and compressible.

Since $|X_D|>1$, $X_D$ is zero-dimensional, and the action is compressible, there is a nonempty clopen subset $\alpha$ of $X$ and $h \in H$ such that $h\alpha$ is properly contained in $\alpha$.  Correspondingly, there is $K \in \mathrm{LD}(G)$ and $h \in H$, such that $hKh\inv$ is a subgroup of $K$ that is closed but not open in $K$.  It then follows that $K$ has an open subgroup of the form $hKh\inv \times L$ where $L = \CC_K(hKh\inv)$; in turn, $L$ is a nontrivial element of $\mathrm{LD}(G)$.  By \cite[Proposition~5.1]{CRW-TitsCore}, we have $\con_G(h) \le H$, where
\[
\con_G(h) := \{g \in G \mid h^ngh^{-n} \rightarrow 1 \text{ as } n \rightarrow +\infty\}.
\]
By \cite[Proposition~6.14]{CRW-Part2}, the intersection $L^* = \con_G(h) \cap L$ is open in $L$.  Let $\beta$ be the clopen subset of $X_D$ corresponding to $L \in \mathrm{LD}(G)$.  Since $X_D$ is compact and the action of $H$ on $X_D$ is minimal, there are $h_1,\dots,h_n \in H$ such that $X_D = \bigcup^n_{i=1}h_i\beta$.  By $(*)$ it follows that $\grp{h_iL^*h\inv_i \mid 1 \le i \le n}$ is open in $G$; thus $H$ is open in $G$.  In particular, $H$ is closed in $S$; since $H$ is dense in $S$, it follows that $H = S$.  This completes the proof of the claim.

The claim applies in particular when $S = H$, so $S$ is open in $G$.  Thus the assumption that $\N_G(H)$ is open follows automatically from assuming that $\N_G(H)$ contains $S$.  We have therefore shown that given a nontrivial subgroup $H$ of $G$ such that $S \le \N_G(H)$, then $H$ is open in $G$ and $S \le H$.  As this conclusion applies in particular to any nontrivial normal subgroup of $S$, we see that $S$ is abstractly simple.
\end{proof}

\end{document}